\newtheorem{thm}{Theorem}[section]
\newtheorem{lem}[thm]{Lemma}
\newtheorem{prop}[thm]{Proposition}
\newtheorem{conj}[thm]{Conjecture}
\theoremstyle{definition}
\theoremstyle{remark}
\newtheorem{remark}[thm]{Remark}
\begin{document}

\title[Examples of finitely determined map-germs of corank 3]{EXAMPLES of FINITELY DETERMINED MAP-GERMS OF \\ CORANK 3 SUPPORTING MOND'S  $\mu \geq \tau$-TYPE CONJECTURE}

\author{Ay{\c s}e Sharland}


\email{aysealtintas@gmail.com}

\keywords{finite determinacy, corank, Mond conjecture}

\subjclass[2000]{58K40, 32S30}


\maketitle
\begin{flushright}\begin{small}\textit{Dedicated to my parents}\end{small}\end{flushright}
\section{Introduction}

A famous $\mu \geq \tau$-type conjecture by D. Mond states that for a finitely $\mathcal{A}$-determined map-germ from $\mathbb{C}^n$ to $\mathbb{C}^{n+1}$, provided $(n,n+1)$ is in the range of Mather's nice dimensions, 
\begin{equation}\label{conj}\mu_I\geq \mathcal{A}_e\textnormal{-codimension}, \end{equation}
and with equality if the map-germ is weighted homogeneous. 
The conjecture was proven for $n=1$ by Mond (\cite{mond-bent}) and for $n=2$ by 
Pellikaan and de Jong (unpublished), de Jong and Straten (\cite{jong-straten}) and D. Mond (\cite{mond89}). It is still open for $n\geq 3$. Several examples supporting the conjecture were given in the case of map-germs of corank 1 (\cite{houston-kirk}) and corank 2 (\cite{altintas2}). It was believed by some that it would only hold for map-germs of corank $\leq 2$. In this article, we provide examples of finitely determined map-germs of corank 3 defined from $\mathbb{C}^3$ to $\mathbb{C}^4$ which satisfy the conjecture (Section \ref{sect-ex}). These are the first examples in the literature known to the author.

\section{Terminology and Notations}

\subsection{Finite determinacy}

Our terminology is standard, but the details can be found in \cite{wall} or \cite{martinet}. We denote the space of holomorphic map-germs $f\colon (\mathbb{C}^n,0)\rightarrow (\mathbb{C}^p,0)$ by $\mathcal{E}_{n,p}^0$. The group $\mathcal{A}:=\textnormal{Diff}(\mathbb{C}^n,0)\times \textnormal{Diff}(\mathbb{C}^p,0)$ of local diffeomorphisms  acts on $\mathcal{E}_{n,p}^0$  by 
\[(\phi,\psi)\cdot f\mapsto \psi \circ f \circ \phi^{-1}\]
 for all $(\phi,\psi)\in\mathcal{A}$.
We say that $f,g\in\mathcal{E}_{n,p}^0$ are $\mathcal{A}$-\textit{equivalent} if $g\in \mathcal{A}\cdot f$. A map-germ $f\in \mathcal{E}_{n,p}^0$ is $\ell$-$\mathcal{A}$-\textit{determined} if every map-germ $g\in\mathcal{E}_{n,p}^0$ with the same $\ell$-jet (at $0$) as $f$ is $\mathcal{A}$-equivalent to $f$. Furthermore, $f$ is  \textit{finitely $\mathcal{A}$-determined} (or $\mathcal{A}$-\textit{finite}) if it is $\ell$-$\mathcal{A}$-determined for some $\ell<\infty$. A map-germ is $\mathcal{A}$-\textit{stable} if any of its unfoldings is $\mathcal{A}$-equivalent to the trivial unfolding $f\times 1$. By fundamental results of Mather, finite determinacy is equivalent to the finite dimensionality of the normal space 
\begin{equation}\label{normalsp}N\mathcal{A}_ef:=\frac{f^*(\Theta_{\mathbb{C}^p,0})}{tf(\Theta_{\mathbb{C}^n,0})+f^{-1}(\Theta_{\mathbb{C}^p,0})},\end{equation} and thus (if $f$ is not stable) to $0\in\mathbb{C}^p$ being an isolated point of instability of $f$. We set  $\mathcal{A}_e\textnormal{-codim}(f):=\textnormal{dim}_{\mathbb{C}}N\mathcal{A}_ef$. 

The corank of a map-germ $f\in \mathcal{E}_{n,p}^0$ with $n\leq p$ is defined to be 
\[ \textnormal{corank }f=n-\textnormal{rank } \textnormal{d}f(0).\]

\begin{remark}\textit{a.} There are a few methods to calculate $\mathcal{A}_e$-codimension for a given finite map-germ and each may have certain disadvantages. Calculating it directly from the definition is not always practical since one has to do it by hand -- the normal space is not an $\mathcal{O}_{\mathbb{C}^n,0}$-module and that makes it difficult to write it into a computer algorithm.\newline \textit{b.} Alternatively, one can use J. Damon's theory where he relates $\mathcal{A}$-equivalence with $\mathcal{K}_V$ equivalence and shows that for a finitely $\mathcal{A}$-determined map-germ $f\in \mathcal{E}_{n,p}^0$, the normal spaces with respect to $\mathcal{A}$ and $\mathcal{K}_V$ are isomorphic:
\begin{equation}\label{eq-akv} N\mathcal{A}_ef \cong N\mathcal{K}_{V,e}g
\end{equation}
where $V$ is the image of a stable unfolding $F$ of $f$ and $g$ is the pull-back map from $(\mathbb{C}^p,0)$ to the target space of $F$ (\cite{damon89}, see also \cite[Theorem 8.1]{mond-diff}). The right hand side of (\ref{eq-akv}) can easily be adapted to a computer algebra program (see \cite{altintas2} for examples). However, this procedure requires a long time to complete when the number of parameters for a stable unfolding is too big, as for the examples we study in this article. \end{remark}

Here, we will use the following proposition which provides a shorter and much faster algorithm to calculate $\mathcal{A}_e$-codimension. 

\begin{prop}[Proposition 2.1, \cite{mond89}]\label{prop-acod} Let $h$ be a defining equation of the image $(X,0)$ of the finitely $\mathcal{A}$-determined  map-germ $f\in\mathcal{E}_{n,n+1}^0$. Then the evaluation on $h$ defines an isomorphism of $\mathcal{O}_{\mathbb{C}^{n+1},0}$-modules
\begin{equation}\label{eq-acod} 
N\mathcal{A}_e f \cong \frac{J_h\mathcal{O}_{\mathbb{C}^n,0}}{J_h\mathcal{O}_{X,0}}.
\end{equation}
\end{prop}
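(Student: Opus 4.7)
The plan is to exhibit the claimed isomorphism via the evaluation on $h$,
\[
\mathrm{ev}_h\colon f^*(\Theta_{\mathbb{C}^{n+1},0})\longrightarrow \mathcal{O}_{\mathbb{C}^n,0},\qquad \xi=\textstyle\sum_i\xi_i\,\partial/\partial y_i\;\longmapsto\;\textstyle\sum_i\xi_i\cdot\bigl((\partial h/\partial y_i)\circ f\bigr).
\]
This map is $\mathcal{O}_{\mathbb{C}^{n+1},0}$-linear via $f^*$, and its image is manifestly $J_h\mathcal{O}_{\mathbb{C}^n,0}$ (surjectivity via $\xi=\phi\,\partial/\partial y_i$). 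I would then check that $\mathrm{ev}_h$ kills $tf(\Theta_{\mathbb{C}^n,0})$ by the chain rule $\mathrm{ev}_h(tf(\eta))=\eta(h\circ f)=0$, and sends $\zeta\circ f\in f^{-1}(\Theta_{\mathbb{C}^{n+1},0})$ to $(\zeta\cdot h)\circ f\in f^*(J_h)$, which equals $J_h\mathcal{O}_{X,0}$ once we identify $\mathcal{O}_{X,0}$ with its image in $\mathcal{O}_{\mathbb{C}^n,0}$ under $f^*$; this identification is legitimate because finite $\mathcal{A}$-determinacy forces $f$ to be the normalization of the reduced hypersurface $(X,0)$. Putting this together yields a surjective $\mathcal{O}_{\mathbb{C}^{n+1},0}$-linear map $\overline{\mathrm{ev}_h}\colon N\mathcal{A}_ef\twoheadrightarrow J_h\mathcal{O}_{\mathbb{C}^n,0}/J_h\mathcal{O}_{X,0}$.

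For injectivity, suppose $\xi\in f^*(\Theta_{\mathbb{C}^{n+1},0})$ has $\mathrm{ev}_h(\xi)\in J_h\mathcal{O}_{X,0}$. Writing $\mathrm{ev}_h(\xi)=\sum_i(a_i\,\partial h/\partial y_i)\circ f$ for some $a_i\in\mathcal{O}_{\mathbb{C}^{n+1},0}$ and setting $\zeta=\sum_i a_i\,\partial/\partial y_i$, the difference $\xi-\zeta\circ f$ lies in $\ker(\mathrm{ev}_h)$. So the problem reduces to the kernel inclusion
\[
\ker(\mathrm{ev}_h)\;\subseteq\; tf(\Theta_{\mathbb{C}^n,0})+f^{-1}(\Theta_{\mathbb{C}^{n+1},0}).
\]

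This inclusion is the main obstacle. Differentiating the identity $h\circ f=0$ gives $(\nabla h\circ f)^{\mathsf T}\cdot df=0$, so the $n$ columns of $df$ automatically furnish tangential syzygies of $(\nabla h)\circ f$; at any point of the open dense immersive locus of $f$ they span the whole annihilator of $(\nabla h)\circ f$, and $\ker(\mathrm{ev}_h)=tf(\Theta_{\mathbb{C}^n,0})$ there. The technical work is to promote this pointwise equality, across the non-immersive locus of $f$, to a module equality modulo $f^{-1}(\Theta_{\mathbb{C}^{n+1},0})$. Concretely, given $\xi\in\ker(\mathrm{ev}_h)$ I would produce $\zeta\in\mathrm{Derlog}_0(h)\subseteq\Theta_{\mathbb{C}^{n+1},0}$ with $\xi-\zeta\circ f\in tf(\Theta_{\mathbb{C}^n,0})$. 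The existence of this lift is a consequence of the Gorenstein structure of $\mathcal{O}_{X,0}$ (as a hypersurface) together with $\mathcal{O}_{\mathbb{C}^n,0}$ being a Cohen--Macaulay $\mathcal{O}_{X,0}$-module through normalization, which ensures that the syzygies of $\nabla h$ on the target account, modulo the image of $df$, for all source syzygies of $(\nabla h)\circ f$. Alternatively, one could extract the kernel inclusion from Damon's isomorphism (\ref{eq-akv}) by unpacking $N\mathcal{K}_{V,e}g$ in the hypersurface setting. In either route, the delicate point is controlling the lift along the critical set of $f$, where the module of syzygies jumps in rank and only the Cohen--Macaulay hypothesis prevents the appearance of extra generators.
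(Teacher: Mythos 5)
First, a caveat: the paper does not prove this proposition --- it is quoted verbatim from \cite{mond89} --- so there is no in-paper argument to compare against; the only internal clue to the mechanism is Remark \ref{rem-acod}, which says the proof really runs on the ramification locus having codimension $2$. Measured against the standard proof, your setup is correct and complete as far as it goes: $\mathrm{ev}_h$ is the right map, its image is $J_h\mathcal{O}_{\mathbb{C}^n,0}$, it kills $tf(\Theta_{\mathbb{C}^n,0})$ by the chain rule, it carries $f^{-1}(\Theta_{\mathbb{C}^{n+1},0})$ \emph{onto} $J_h\mathcal{O}_{X,0}$ (via the identification $\mathcal{O}_{X,0}\hookrightarrow\mathcal{O}_{\mathbb{C}^n,0}$, legitimately justified by $f$ being the normalization), and injectivity does reduce to the inclusion $\ker(\mathrm{ev}_h)\subseteq tf(\Theta_{\mathbb{C}^n,0})+f^{-1}(\Theta_{\mathbb{C}^{n+1},0})$.

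But that inclusion is the entire content of the proposition, and you do not prove it; you describe it as ``the technical work'' and appeal to Gorenstein/Cohen--Macaulay properties without exhibiting an argument. Worse, the pointwise justification you do offer is false on part of the immersive locus: at a point $x$ lying over the double-point curve of $X$ one has $(\nabla h)(f(x))=0$, so the fibrewise annihilator of $(\nabla h)\circ f$ is all of $\mathbb{C}^{n+1}$ and is certainly not spanned by the $n$ columns of $df(x)$; the equality of syzygy modules there needs the local factorization $h=h_1h_2$ and the fact that $f^*h_2$ is a nonzerodivisor, which you never invoke. The argument that actually closes the gap is sharper than what you conjecture: one has $(\partial h/\partial y_i)\circ f=\pm\lambda\, m_i$, where the $m_i$ are the signed maximal minors of $df$ and $\lambda$ generates the conductor (Piene's theorem), hence is a nonzerodivisor; therefore $\ker(\mathrm{ev}_h)$ equals the kernel of contraction against $(m_1,\dots,m_{n+1})$, and the Buchsbaum--Eisenbud/Hilbert--Burch acyclicity criterion applied to $0\to\mathcal{O}_{\mathbb{C}^n,0}^{\,n}\xrightarrow{\,df\,}\mathcal{O}_{\mathbb{C}^n,0}^{\,n+1}\to\mathcal{O}_{\mathbb{C}^n,0}$ shows this kernel is exactly $tf(\Theta_{\mathbb{C}^n,0})$, \emph{provided} the ideal of maximal minors --- i.e.\ the ramification locus --- has codimension $2$. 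This is precisely the hypothesis isolated in Remark \ref{rem-acod}, and it is the ingredient your proposal replaces with an unproven assertion.
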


\begin{remark}\label{rem-acod} We understand from the proof of Proposition \ref{prop-acod} that it is sufficient for $f$ to have a ramification locus of codimension 2 to have the isomorphism in (\ref{eq-acod}).\end{remark}

 In what follows, we will denote the right hand side of (\ref{eq-acod}) by $N_f$.

\subsection{Topology of the image}

If $f\in\mathcal{E}_{n,p}^0$ is finitely $\mathcal{A}$-determined then it has an isolated instability at the origin (\cite[p. 241]{mather73},\cite{gaffney}). Moreover, if $(n,n+1)$ are nice dimensions then the image of a stabilisation of $f$ has the homotopy type of wedge of $n$-spheres (\cite[Theorem 1.4]{mond89}). The number of $n$-spheres in the wedge is called the image Milnor number and denoted by $\mu_I$.

\begin{remark} \textit{a.} For map-germs of corank 1 in $\mathcal{E}_{n,n+1}^0$,  Goryunov and Mond gave a method to calculate the cohomology of the image of a stable perturbation using alternating cohomology groups of multiple point spaces and that provides a formula for the image Milnor number (\cite{goryunov-mond}). In \cite{houston-local}, K. Houston showed that the same formula holds for stable perturbations of map-germs of any corank. See \cite{mond-virtual} for detailed calculations of $\mu_I$ for corank 2 map-germs based on these ideas.
\newline \textit{b.} For weighted homogeneous map-germs of any corank in $\mathcal{E}_{2,3}^0$, Mond has an ingenous formula for $\mu_I$ given in terms of weights and degrees (\cite{mond-number}). In \cite{ohmoto}, T. Ohmoto improved it to weighted homogenous map-germs in $\mathcal{E}_{3,4}^0$ using characteristic classes and Thom polynomials. That is the formula we will use for our examples in this article.
\end{remark}

Clearly, proving the conjecture will provide an alternative method to calculate the image Milnor Number for weighted homogeneous map-germs of any corank. One of the ideas about how to attack the conjecture is based on the relation between $\mathcal{A}_e$-equivalance and Damon's $\mathcal{K}_H$-equivalence: The conjecture holds if and only if a particular relative normal space $N\mathcal{K}_{H,e/\mathbb{C}}G$ is a Cohen-Macaulay module (\cite{altintas2}, \cite{mond15}). Recently, J. F. Bobadilla,  J.J. Nu{\~n}o and G. Pe{\~n}afort proved that it is also equivalent to showing that a \textit{jacobian} module (a relative version of the module $M(f)$ mentioned in Remark \ref{rem-bnp}) has the Cohen-Macaulay property (\cite{B-N-P}).




\section{Examples}\label{sect-ex}

Before we present our examples, we restate the definition for $N_f$ that will help us putting our calculations into a computer algorithm.

\begin{prop}\label{prop-sing} Let $f\in\mathcal{E}_{n,n+1}^0$ be a finite map-germ and let $X$ be its image, defined by an ideal $h\in \mathcal{O}_{\mathbb{C}^{n+1},0}$. Assume that $f$ is a weighted homogeneous map-germ. Then 
\[N_f= \frac{(f^*)^{-1}\left((f^*J_h)\mathcal{O}_{\mathbb{C}^n,0}\right)}{J_h\mathcal{O}_{\mathbb{C}^{n+1},0}}.\] If, in addition, the ramification locus of $f$ has codimension 2 then 
\[\textnormal{dim}_\mathbb{C} N_f=\mathcal{A}_e\textnormal{-codim}(f).\]
\end{prop}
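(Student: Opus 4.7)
The plan is to construct an explicit isomorphism between $N_f=J_h\mathcal{O}_{\mathbb{C}^{n},0}/J_h\mathcal{O}_{X,0}$ and the proposed quotient $A/J_h$, where $A:=(f^{*})^{-1}\bigl((f^{*}J_h)\mathcal{O}_{\mathbb{C}^{n},0}\bigr)$, induced by the pullback $f^{*}$. Since $f$ is finite with $\ker f^{*}=(h)$, the pullback identifies $\mathcal{O}_{X,0}$ with a subring of $\mathcal{O}_{\mathbb{C}^{n},0}$; under this identification $J_h\mathcal{O}_{X,0}$ is just $f^{*}(J_h)\subset \mathcal{O}_{\mathbb{C}^{n},0}$, and $J_h\mathcal{O}_{\mathbb{C}^{n+1},0}=J_h$ is tautological since $J_h$ is already an ideal of $\mathcal{O}_{\mathbb{C}^{n+1},0}$.

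I would then consider the map $\phi\colon A\to N_f$, $g\mapsto [f^{*}(g)]$. To compute its kernel, suppose $f^{*}(g)\in f^{*}(J_h)$; write $f^{*}(g)=f^{*}(\alpha)$ for some $\alpha\in J_h$, so $g-\alpha\in (h)$. The Euler identity applied to the weighted-homogeneous $h$ yields $h\in J_h$, hence $(h)\subset J_h$ and $g\in J_h$. The reverse inclusion $J_h\subset\ker\phi$ is immediate. For surjectivity, the essential ingredient is the classical fact that for a reduced hypersurface $V(h)$ the Jacobian ideal $J_h$ is contained in the conductor of the extension $\mathcal{O}_{X,0}\subset \mathcal{O}_{\mathbb{C}^{n},0}$; this gives $J_h\mathcal{O}_{\mathbb{C}^{n},0}\subset f^{*}(\mathcal{O}_{\mathbb{C}^{n+1},0})$, so every $\eta\in J_h\mathcal{O}_{\mathbb{C}^{n},0}$ is of the form $f^{*}(g)$ for some $g\in \mathcal{O}_{\mathbb{C}^{n+1},0}$, and any such $g$ automatically lies in $A$. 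Combining these computations, $\phi$ descends to the desired isomorphism $A/J_h\cong N_f$.

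The second assertion is then immediate: under the codimension-$2$ ramification hypothesis, Proposition~\ref{prop-acod} together with Remark~\ref{rem-acod} gives $N\mathcal{A}_e f\cong N_f$ as $\mathbb{C}$-vector spaces, whence $\dim_{\mathbb{C}}N_f=\mathcal{A}_e\textnormal{-codim}(f)$. The most delicate point of the argument is the conductor inclusion $J_h\subset \mathfrak{c}$, which is standard for reduced hypersurface singularities but should be explicitly cited in the write-up; by contrast, the weighted-homogeneity hypothesis enters only through the identity $h\in J_h$, used exclusively to pin down $\ker\phi$.
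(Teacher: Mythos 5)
Your proof is correct and follows essentially the same route as the paper's: both arguments rest on the conductor inclusion $J_h\subseteq\mathcal{C}$ (via $J_h\subseteq\mathrm{Fitt}_1(f_*\mathcal{O}_{\mathbb{C}^n,0})\subseteq\mathcal{C}$) to see that $J_h\mathcal{O}_{\mathbb{C}^n,0}$ lies in the image of $f^*$, and on the Euler relation $h\in J_h$ (weighted homogeneity) to lift from $\mathcal{O}_{X,0}$ to $\mathcal{O}_{\mathbb{C}^{n+1},0}$, with the second assertion following from Remark~\ref{rem-acod}. Your packaging of the argument as an explicit kernel-and-image computation for the map $g\mapsto[f^*(g)]$ is just a more detailed rendition of the paper's identification of $N_f$ with a quotient of the preimage under $\bar{f}^*$.
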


\begin{proof} Our argument is based on exploiting the $\mathcal{O}_{\mathbb{C}^{n+1},0}$-module structure of $N_f$. The definition of $N_f$ in Proposition \ref{prop-acod} reads as 
\begin{equation}\label{eq-Nf1} N_f=\frac{(f^*J_h)\mathcal{O}_{\mathbb{C}^n,0}}{J_h\mathcal{O}_{X,0}}.
\end{equation}
Let  $\mathcal{C}$ be the conductor ideal  of $\mathcal{O}_{\mathbb{C}^n,0}$ in  $\mathcal{O}_{X,0}$, that is,
\[ \mathcal{C}=\left\{r\in  \mathcal{O}_{X,0} \mid r\cdot  \mathcal{O}_{\mathbb{C}^n,0} \subset  \mathcal{O}_{X,0}\right \}.
\]  We have the following inclusion of the ideals $ J_h \subseteq \textnormal{Fitt}_1(f_*\mathcal{O}_{\mathbb{C}^n,0}) \subseteq \mathcal{C} $
(see \cite[p.121]{mond-pellikaan} for the second inclusion). Notice that since $J_h\subseteq \mathcal{C}$ and  
\[\sum_{i=1}^{n+1}\alpha_i\frac{\partial h}{\partial Y_i} \in \mathcal{O}_{X,0} \]
for any $\alpha_i\in\mathcal{O}_{\mathbb{C}^n,0}$, $(f^*J_h)\mathcal{O}_{\mathbb{C}^n,0}$ is an ideal both in $\mathcal{O}_{\mathbb{C}^n,0}$ and in $\mathcal{O}_{X,0}$. Hence, the map
\[\bar{f}^*\colon \mathcal{O}_{X,0} \rightarrow \mathcal{O}_{\mathbb{C}^n,0}\]
contains $(f^*J_h)\mathcal{O}_{\mathbb{C}^n,0}$ in its image. So, instead of (\ref{eq-Nf1}), we can take
\[N_f=\frac{(\bar{f}^*)^{-1}\left((f^*J_h)\mathcal{O}_{\mathbb{C}^n,0}\right)}{J_h\mathcal{O}_{X,0}}.\]
As $f$ is weighted homogeneous, we have $h\in J_h\mathcal{O}_{\mathbb{C}^{n+1},0}$. Therefore,
\[\frac{(\bar{f}^*)^{-1}\left((f^*J_h)\mathcal{O}_{\mathbb{C}^n,0}\right)}{J_h\mathcal{O}_{X,0}}=\frac{(f^*)^{-1}\left((f^*J_h)\mathcal{O}_{\mathbb{C}^n,0}\right)}{J_h\mathcal{O}_{\mathbb{C}^{n+1},0}}.\]
Finally, the second part of the statement follows from Remark \ref{rem-acod}.
\end{proof}

\begin{remark}\label{rem-bnp} The same result, but with a different approach, can also be found in \cite{B-N-P}. There, the authors study the kernel $M(f)$ of the epimorphism 
\[\frac{\mathcal{C}}{J_h} \rightarrow \frac{\mathcal{C}}{J_h\mathcal{O}_{\mathbb{C}^n,0}}.\] They show that  
\[M(f)=\frac{(f^*)^{-1}\left((f^*J_h)\mathcal{O}_{\mathbb{C}^n,0}\right)}{J_h\mathcal{O}_{\mathbb{C}^{n+1},0}}\] and that, for weighted homogeneous map-germs, $M(f)=N_f$ (\cite[Proposition 3.3, Proposition 5.1]{B-N-P}).


\end{remark}

\begin{prop}\label{prop-ex1} The map-germ
\begin{eqnarray*} f_1\colon (\mathbb{C}^3,0)&\rightarrow &(\mathbb{C}^4,0) \\
(x,y,z)&\mapsto & (y^2+xz,\ x^5+yz+xy^2,\ x^6+y^3+z^2,\ x^7+x^4z+xz^2+y^2z)
\end{eqnarray*}
has $\mathcal{A}_e$-codimension equal to 18967. It is weighted homogeneous with weights $(1,2,3)$ and degrees $(4,5,6,7)$, of corank 3 and satisfy the Mond conjecture.
\end{prop}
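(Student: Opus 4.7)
The plan is to verify the four separate claims of the proposition: weighted homogeneity with the stated weights and degrees, corank $3$, the specific value $\mathcal{A}_e\textnormal{-codim}(f_1)=18967$, and Mond's inequality $\mu_I\geq\mathcal{A}_e\textnormal{-codim}$.

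Weighted homogeneity and corank are immediate from inspection. With weights $(1,2,3)$ assigned to $(x,y,z)$, every monomial appearing in the $i$-th coordinate of $f_1$ has weight $i+3$, giving degrees $(4,5,6,7)$ as claimed; and since every such monomial has total degree at least two, $\textnormal{d}f_1(0)=0$ and hence $\textnormal{corank}(f_1)=3$.

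To apply Proposition \ref{prop-sing} I need a defining equation $h\in\mathcal{O}_{\mathbb{C}^4,0}$ of the image $X$ of $f_1$, together with verification that the ramification locus has codimension $2$. The ramification locus is cut out by the four $3\times 3$ minors of the Jacobian matrix of $f_1$, and a Singular dimension computation on this ideal should show it defines a curve in $(\mathbb{C}^3,0)$. The equation $h$ is obtained by eliminating $x,y,z$ from $\langle Y_1-f_1^{(1)},\ldots,Y_4-f_1^{(4)}\rangle\subset\mathbb{C}[x,y,z,Y_1,Y_2,Y_3,Y_4]$ via a Gr\"obner basis. Because $f_1$ is weighted homogeneous, $h$ will automatically be weighted homogeneous in the $Y_i$ with the same weights $(4,5,6,7)$; this gives a useful sanity check on the elimination output.

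With $h$ in hand, Proposition \ref{prop-sing} identifies $\mathcal{A}_e\textnormal{-codim}(f_1)$ with the $\mathbb{C}$-dimension of
\[ N_{f_1}=\frac{(f_1^*)^{-1}\bigl((f_1^*J_h)\mathcal{O}_{\mathbb{C}^3,0}\bigr)}{J_h\mathcal{O}_{\mathbb{C}^4,0}}. \]
In a computer algebra system one forms the Jacobian ideal $J_h$, pulls it back to get $I:=(f_1^*J_h)\mathcal{O}_{\mathbb{C}^3,0}\subset\mathcal{O}_{\mathbb{C}^3,0}$, recovers the preimage $(f_1^*)^{-1}(I)$ by eliminating $x,y,z$ from $\langle Y_i-f_1^{(i)}\rangle+I$ in the seven-variable ring $\mathbb{C}[x,y,z,Y_1,\ldots,Y_4]$, and then reads off $\dim_{\mathbb{C}} N_{f_1}$; the target value is $18967$. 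To verify Mond's inequality it then suffices to compute $\mu_I(f_1)$ via Ohmoto's closed-form polynomial for weighted homogeneous map-germs in $\mathcal{E}_{3,4}^0$ (\cite{ohmoto}), evaluated at weights $(1,2,3)$ and degrees $(4,5,6,7)$. Since $f_1$ is weighted homogeneous, the conjecture predicts equality, so this computation should also return $18967$, confirming (\ref{conj}) in its strongest form.

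The main obstacle will be the two elimination steps: producing $h$, and producing $(f_1^*)^{-1}(I)$. In the corank-$3$ setting $h$ is expected to have large degree, and the Gr\"obner basis computations in seven variables may strain CAS memory. Once these two ideals are computed, the remaining vector-space dimension calculation and the evaluation of Ohmoto's formula at $(1,2,3;4,5,6,7)$ are purely mechanical.
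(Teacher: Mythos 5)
Your proposal is correct and follows essentially the same route as the paper: verify corank and the ramification-locus codimension, compute $\dim_{\mathbb{C}}N_{f_1}$ via Proposition \ref{prop-sing} using elimination/preimage computations in a computer algebra system (the paper does exactly this with \textsc{Singular}'s \texttt{preimage} and \texttt{modulo} commands, working over characteristic 31991 to make the Gr\"obner bases feasible), and compare with Ohmoto's formula for $\mu_I$. The only cosmetic difference is that you describe the preimage steps explicitly as eliminations in a seven-variable ring, which is what \texttt{preimage} does internally.
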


\begin{proof} Firstly, we have 
\begin{equation}\label{rf} \textnormal{d}f= \left[ \begin{array}{ccc}  z &        2y &   x \\       
5x^4+y^2 &     z+2xy & y \\       
6x^5 &        3y^2 & 2z \\      
7x^6+4x^3z+z^2 & 2yz &  x^4+2xz+y^2
\end{array}\right] \end{equation}
and $\textnormal{d}f(0)$ is the zero matrix. Hence $f_1$ is of corank 3. The ramification locus is defined by $3\times 3$-minors of $\textnormal{d}f$. Its codimension is equal to 2. We check that by the following \textsc{Singular} (\cite{singular}) code.
\begin{verbatim}
ring s=0,(x,y,z),(wp(1,2,3));
ideal f=y2+xz,x5+yz+xy2,x6+y3+z2,x7+x4z+xz2+y2z;
matrix df=jacob(f);
ideal rf=std(minor(df,3));
dim(rf);
//->1.
\end{verbatim}
Hence, we can apply Proposition \ref{prop-sing} to calculate $\mathcal{A}_e$-codimension of $f_1$, i.e. the vector space dimension of $N_f$. We run the following code to find that.
\vskip8pt 
\noindent\verb|ring t=31991,(X,Y,Z,W),(wp(4,5,6,7));| \quad  $//$\ the target of $f_1$ \\
\verb|ring s=31991,(x,y,z),(wp(1,2,3));| \quad  $//$\ the domain of $f_1$ \\
\verb|map f1=t,y2+xz,x5+yz+xy2,x6+y3+z2,x7+x4z+xz2+y2z;| \\
\verb|ideal p=0;| \\
\verb|setring t;| \\
\verb|ideal h=preimage(s,f1,p);| \quad  $//$\ the ideal defining the image of $f_1$ \\
\verb|ideal jh=jacob(h);| \\
\verb|setring s;| \\
\verb|ideal fjh=f1(jh);  | \quad $//$ $f_1^*J_h$ \\
\verb|setring t;| \\
\verb|ideal ffjh=preimage(s,f1,fjh);| \quad $//$ $(f_1^*)^{-1}f_1^*J_h$ \\
\verb|def N=modulo(ffjh,jh);| \\
\verb|vdim(std(N));| \\
\verb|//->18967.|
\vskip8pt 

On the other hand, Ohmoto's formula (\cite{ohmoto}) for $\mu_I$ for weighted homogenous map-germs in $\mathcal{E}_{3,4}^0$ also gives 18967. Therefore, $f_1$ satisfies the conjecture. \end{proof}

\begin{remark} We carry out our calculations over characteristic 31991 since the computer struggles to give an output over characteristic 0. This choice does not effect the outcome of the code. 
\end{remark}

Similarly, we can confirm the following examples.
\begin{prop}\label{prop-ex2} The map-germ
\begin{eqnarray*} f_2\colon (\mathbb{C}^3,0)&\rightarrow &(\mathbb{C}^4,0) \\
(x,y,z)&\mapsto & (y^2+xz,\ x^5+yz+xy^2,\ x^6+y^3+z^2,\ x^9+x^6z+z^3+y^3z)
\end{eqnarray*}
has $\mathcal{A}_e$-codimension equal to 41244. It is weighted homogeneous with weights $(1,2,3)$ and degrees $(4,5,6,9)$, of corank 3 and satisfy the Mond conjecture.
\end{prop}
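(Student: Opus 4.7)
The proof is a verbatim repeat of the argument for Proposition \ref{prop-ex1}, applied to $f_2$ in place of $f_1$. First I would verify the structural claims by inspection: with $\textnormal{wt}(x,y,z)=(1,2,3)$ one checks in each coordinate that every monomial has the asserted weight (for instance $x^9$, $x^6z$, $z^3$, $y^3z$ all have weight $9$), so $f_2$ is weighted homogeneous of type $(4,5,6,9)$. Since no coordinate contains a linear term, $\textnormal{d}f_2(0)=0$ and $f_2$ is of corank $3$.

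Next, to invoke Proposition \ref{prop-sing} I need the ramification locus of $f_2$ to have codimension $2$. Note that $\textnormal{d}f_2$ coincides with $\textnormal{d}f_1$ in its first three rows, and the first three coordinates of $f_2$ are identical to those of $f_1$, so it is reasonable to expect the ideal of $3\times 3$ minors to again cut out a $1$-dimensional set. I would confirm this by rerunning the first \textsc{Singular} block of the proof of Proposition \ref{prop-ex1} with the fourth coordinate replaced by $x^9+x^6z+z^3+y^3z$ and checking that \texttt{dim(rf)} returns $1$.

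With the hypotheses of Proposition \ref{prop-sing} in force, we have $\mathcal{A}_e\textnormal{-codim}(f_2)=\dim_{\mathbb{C}} N_{f_2}$. I would then execute the second \textsc{Singular} block of Proposition \ref{prop-ex1}, replacing the target ring declaration by weights $(4,5,6,9)$ and the map by the four coordinates of $f_2$; the expected output of \texttt{vdim(std(N))} is $41244$. Finally, substituting the weights $(1,2,3)$ and degrees $(4,5,6,9)$ into Ohmoto's formula \cite{ohmoto} for $\mu_I$ of weighted homogeneous germs in $\mathcal{E}_{3,4}^0$ should likewise return $41244$, yielding $\mu_I=\mathcal{A}_e\textnormal{-codim}(f_2)$ and hence the Mond conjecture (with equality, as expected in the weighted homogeneous case).

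The only genuine obstacle is computational. The fourth coordinate has degree $9$ rather than $7$, so the defining equation $h$ of the image, the Jacobian ideal $J_h$, and the preimage $(f_2^*)^{-1}(f_2^*J_h)$ are all substantially larger than in the proof of Proposition \ref{prop-ex1}, consistent with the jump in $\mathcal{A}_e$-codimension from $18967$ to $41244$. The resulting Gr\"obner basis and preimage computations in characteristic $31991$ will be significantly slower, but they remain entirely mechanical and will terminate; this is really the heart of the verification.
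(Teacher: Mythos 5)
Your proposal is correct and is exactly what the paper intends: the paper gives no separate argument for $f_2$, stating only that it is confirmed ``similarly'' to Proposition \ref{prop-ex1}, i.e.\ by checking weighted homogeneity and corank by inspection, verifying computationally that the ramification locus has codimension $2$ so that Proposition \ref{prop-sing} applies, computing $\dim_{\mathbb{C}}N_{f_2}=41244$ with the same \textsc{Singular} code adapted to degrees $(4,5,6,9)$, and matching this against Ohmoto's formula for $\mu_I$. Your remarks on the increased computational cost are accurate but do not change the argument.
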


\begin{prop}\label{prop-ex3} The map-germ
\begin{eqnarray*} f_3\colon (\mathbb{C}^3,0)&\rightarrow &(\mathbb{C}^4,0) \\
(x,y,z)&\mapsto & (y^2+xz,\ x^5+yz+xy^2,\ x^6+y^3+z^2,\ x^{13}+x^{10}z+xz^4+y^5z)
\end{eqnarray*}
has $\mathcal{A}_e$-codimension equal to 127295. It is weighted homogeneous with weights $(1,2,3)$ and degrees $(4,5,6,13)$, of corank 3 and satisfy the Mond conjecture.
\end{prop}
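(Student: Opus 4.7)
The plan is to follow the same three-step template used in the proof of Proposition \ref{prop-ex1}, adapted to the map-germ $f_3$. First, I would verify the corank and the codimension of the ramification locus. Since each component of $f_3$ is a sum of monomials of positive weighted degree (with weights $(1,2,3)$ and degrees $(4,5,6,13)$), every entry of the Jacobian matrix $\mathrm{d}f_3$ vanishes at the origin, so $f_3$ has corank $3$. I would then load $f_3$ in \textsc{Singular}, form the ideal of $3\times 3$-minors of $\mathrm{d}f_3$, and check that its dimension as a subscheme of $(\mathbb{C}^3,0)$ is $1$, i.e.\ that the ramification locus has codimension $2$. This puts us in the setting of Remark \ref{rem-acod}, so Proposition \ref{prop-sing} applies.

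Next, I would compute $\mathcal{A}_e\textnormal{-codim}(f_3)=\dim_{\mathbb{C}} N_{f_3}$ by the same \textsc{Singular} pipeline as for $f_1$: set up source and target rings (with weights $(1,2,3)$ and $(4,5,6,13)$, respectively, and working over characteristic $31991$ for efficiency), define $f_3$ as a ring map, use \verb|preimage| to extract the defining ideal $h$ of the image $X=f_3(\mathbb{C}^3)$, form $J_h$, push it forward to obtain $f_3^*J_h$, pull back via \verb|preimage| again to obtain $(f_3^*)^{-1}(f_3^*J_h)\mathcal{O}_{\mathbb{C}^3,0}$, and finally use \verb|modulo| and \verb|vdim(std(...))| to read off the dimension of the quotient. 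The expected output is $127295$.

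Finally, I would apply Ohmoto's formula from \cite{ohmoto} to the weights $(1,2,3)$ and degrees $(4,5,6,13)$ to compute the image Milnor number $\mu_I(f_3)$. If this evaluation also yields $127295$, then $\mu_I=\mathcal{A}_e$-codim, establishing that $f_3$ satisfies Mond's conjecture (with equality, as is expected in the weighted homogeneous case).

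The main obstacle is computational rather than conceptual. The step of passing from the defining ideal $h$ of $X$ to $J_h$, and then forming the preimage $(f_3^*)^{-1}(f_3^*J_h)\mathcal{O}_{\mathbb{C}^3,0}$, involves Gr\"obner basis calculations in a polynomial ring with $4+3$ variables where the relevant generators now have weighted degrees up to $13$. Compared with $f_1$ (where the largest degree was $7$) and $f_2$ (degree $9$), the intermediate ideals here are substantially larger, and both the memory footprint and the running time of \verb|preimage| and \verb|std| grow rapidly. The reason for working over the finite prime field of characteristic $31991$ is precisely to keep coefficient blow-up under control so that the computation terminates in reasonable time; verifying that this prime is not a characteristic of bad reduction for the modules involved (so that the characteristic-$0$ $\mathbb{C}$-dimension agrees with the characteristic-$31991$ one) is the subtle part one must keep in mind, but in practice one guards against this by rerunning over a second large prime and checking the outputs agree.
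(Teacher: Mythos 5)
Your proposal follows exactly the template the paper itself uses: the paper proves Proposition \ref{prop-ex3} only by the phrase ``Similarly, we can confirm the following examples,'' deferring to the proof of Proposition \ref{prop-ex1}, which is precisely the corank check, the codimension-2 ramification locus check, the \textsc{Singular} computation of $\dim_{\mathbb{C}} N_f$ via Proposition \ref{prop-sing}, and the comparison with Ohmoto's formula that you describe. Your added caution about verifying the characteristic-$31991$ reduction against a second prime is a sensible refinement the paper only addresses with a brief remark, but the argument is essentially the same.
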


\begin{remark} It might seem like these three map-germs are parts of a series of map-germs. Ohmoto's formula for the weights $(1,2,3)$ and degrees $(4,5,6,2k+1)$ gives the integer
\[\mu_I=487k^3+576k^2+197k+18+\frac{4k^3+3k^2+5k}{6}\]
for all $k\geq 1$. However, the following map-germs are not finitely $\mathcal{A}$-determined. 
\begin{eqnarray*} (x,y,z)&\mapsto & (y^2+xz,\ x^5+yz+xy^2,\ x^6+y^3+z^2,\ x^{11}+x^{8}z+x^2z^3+y^4z), \\
(x,y,z) &\mapsto & (y^2+xz,\ x^5+yz+xy^2,\ x^6+y^3+z^2,\ x^{15}+x^{12}z+z^5+y^6z), \\
(x,y,z)&\mapsto & (y^2+xz,\ x^5+yz+xy^2,\ x^6+y^3+z^2,\ x^{17}+x^{14}z+x^2z^5+y^7z). \end{eqnarray*}
Of course, this does not prove that there are not any finitely $\mathcal{A}$-determined map-germs in $\mathcal{E}_{3,4}^0$ with weights $(1,2,3)$ and degrees $(4,5,6,11)$, $(4,5,6,15)$ or $(4,5,6,17)$. 
\end{remark}

\section{Other invariants}

In this section we talk about some invariants for the map-germ $f_1$ introduced in Proposition \ref{prop-ex1}. Let us put $f=f_1$ to simplify our notation. The  \textit{multiplicity} of $f$ is
\[\textnormal{qf}(f):=\textnormal{dim}_\mathbb{C}\frac{\mathcal{O}_{\mathbb{C}^3,0}}{f^*\mathfrak{m}_{\mathbb{C}^4,0}}=17.\] 
There exists a presentation of $f_*\mathcal{O}_{\mathbb{C}^3,0}$ of the form
 \begin{equation*}  0\rightarrow 
\mathcal{O}_{\mathbb{C}^{4},0}^{17} \xrightarrow{\textnormal{    }\Lambda\textnormal{    }} \mathcal{O}_{\mathbb{C}
^{4},0}^{17} \xrightarrow{\epsilon} f_*\mathcal{O}_{\mathbb{C}^{3},0}\rightarrow 0 
\end{equation*} 
where $\Lambda$ is a symmetric $17\times 17$-matrix and $\epsilon$ can be chosen as
\[ \epsilon=\left[ \begin{array}{ccccccccccccccccc} 1 & x & y & z & x^2 & xy & xz & yz & z^2 & x^3 & x^2y & xyz & x^2z & xz^2 & x^4 & x^3y & x^4y  \end{array} \right]. \]
The $k$'th multiple point space $M_k$ on the image is defined by the $(k-1)$'st Fitting ideal $\textnormal{Fitt}_{k-1}(f_*\mathcal{O}_{\mathbb{C}^3,0})$ of  $f_*\mathcal{O}_{\mathbb{C}^3,0}$, i.e. the ideal of $(17-(k-1))\times(17-(k-1))$-minors of $\Lambda$. For example, $M_1$ is the image, $M_2$ is the double point space, ... etc. Let $D^k_1(f)$ be the $k$'th multiple point space on the domain with an analytic structure given by $f^*\textnormal{Fitt}_{k-1}(f_*\mathcal{O}_{\mathbb{C}^3,0})$. So, set theoretically, $D^k_1(f)=f^{-1}(M_k)$.

Since $f$ is finitely $\mathcal{A}$-determined, the multiple point spaces are dimensionally correct, that is, 
\[ \textnormal{dim}_\mathbb{C} D^k_1(f)=4-k \]
for $k=1,2,3,4$. Moreover, no finite map-germ in $\mathcal{E}_{3,4}^0$ admits any \textit{genuine} 5-tuple point.\footnote{By a genuine $k$-tuple point, we refer to a point which splits into $k$ distinct points under a stable perturbation. } Hence, we are interested in $D^k_1(f)$ only for $k=2,3,4$. 

\subsection{Triple points}
\begin{lem} $\mu\left(D^3_1(f)\right)=168341$.
\end{lem}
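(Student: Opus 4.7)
The plan is to carry out a direct Singular computation along the same lines as the $\mathcal{A}_e$-codimension calculation in the proof of Proposition~\ref{prop-ex1}, culminating in a Milnor-number computation on a one-dimensional singular locus.

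First, I would build a presentation matrix $\Lambda$ of $f_*\mathcal{O}_{\mathbb{C}^3,0}$ over $\mathcal{O}_{\mathbb{C}^4,0}$ relative to the monomial generating set $\epsilon$ displayed before the lemma. Concretely, for each product $Y_i\cdot\epsilon_j$ (viewed as multiplication on the finite $\mathcal{O}_{\mathbb{C}^4,0}$-module $f_*\mathcal{O}_{\mathbb{C}^3,0}$), I would pull back by $f^*$, reduce the resulting polynomial in $(x,y,z)$ modulo the ideal $f^*h$, and re-express the outcome as an $\mathcal{O}_{\mathbb{C}^4,0}$-linear combination of the $\epsilon_k$'s; the columns of $\Lambda$ are the resulting syzygies. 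The Singular commands \texttt{preimage}, \texttt{reduce} and \texttt{lift} implement this in a few lines, and $\Lambda$ can be forced to be symmetric by standard adjustment.

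Second, I would form $\textnormal{Fitt}_2(f_*\mathcal{O}_{\mathbb{C}^3,0})$ as the ideal of $15\times 15$-minors of $\Lambda$ via \texttt{minor}, and pull it back by $f^*$ to obtain the ideal $I_3\subset\mathcal{O}_{\mathbb{C}^3,0}$ cutting out $D^3_1(f)$. Since $f$ is $\mathcal{A}$-finite, the multiple point spaces are dimensionally correct, so $V(I_3)$ is a $1$-dimensional curve singularity; a \texttt{dim} check and, if needed, a \texttt{radical} call confirm reducedness. The singularity inherits the weighted homogeneity with weights $(1,2,3)$ from $f$.

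Third, to compute the Milnor number of the reduced weighted-homogeneous curve $C:=V(I_3)$, I would invoke the Buchweitz--Greuel formula
\[
\mu(C)=2\,\delta(C)-r(C)+1,
\]
where $r(C)$ is the number of analytic branches and $\delta(C)=\dim_{\mathbb{C}}\widetilde{\mathcal{O}_C}/\mathcal{O}_C$. Both invariants are returned by \texttt{normal.lib} (via \texttt{normal} together with a conductor-length computation), and the output is expected to be the stated integer $168341$.

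The main obstacle is computational rather than conceptual: $\Lambda$ is $17\times 17$ with entries of fairly high degree in four variables, the Fitting ideal is generated by $\binom{17}{2}^2=18496$ minors, and the pullback ideal is dense in $\mathcal{O}_{\mathbb{C}^3,0}$. The subsequent Gr\"obner basis, normalisation and vector-space-dimension computations are what actually consume time and memory. As in the proof of Proposition~\ref{prop-ex1}, carrying out the whole calculation in characteristic $31991$ rather than $0$ is essential to ensure termination in reasonable time, without affecting the resulting number.
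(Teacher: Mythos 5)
Your proposal is correct in substance but extracts the Milnor number by a genuinely different route from the paper. You set up the same object --- the curve $D^3_1(f)$ cut out by $f^*\textnormal{Fitt}_2(f_*\mathcal{O}_{\mathbb{C}^3,0})$, obtained from the $15\times 15$-minors of the $17\times 17$ presentation matrix $\Lambda$ --- but then compute $\mu$ via the Buchweitz--Greuel formula $\mu=2\delta-r+1$, using a normalisation to get the delta invariant and the number of branches. The paper instead uses Greuel's formula $\mu=\tau-t+1$ for \emph{weighted homogeneous} space curve singularities, computing the Tjurina number $\tau=168356$ with Fr\"uhbis-Kr\"uger's \texttt{matrixT1} applied to the Hilbert--Burch $17\times 16$-matrix (obtained from \texttt{syz}) and the Cohen--Macaulay type $t=16$ with \texttt{CMtype}, giving $168356-16+1=168341$. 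Your approach is more general --- $2\delta-r+1$ needs no quasi-homogeneity, so your remark that the curve inherits the weights $(1,2,3)$ is actually superfluous for your argument, whereas it is essential for the paper's --- but it pays for that generality twice: you must verify that the Fitting-ideal structure on $D^3_1(f)$ is reduced (the paper gets this for free from Cohen--Macaulayness plus the isolated singularity, and in any case only needs the determinantal structure to feed into \texttt{matrixT1}), and you must normalise a space curve with $\delta$ on the order of $8\times 10^4$, which is likely to be substantially harder for \textsc{Singular} than the $T^1$ computation, even in characteristic $31991$. Both routes are legitimate and should return the same integer; the paper's choice is dictated by exploiting the weighted homogeneity and the codimension-2 Cohen--Macaulay (Hilbert--Burch) structure to keep the computation within reach.
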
 
\begin{proof} We use Greuel's formula for weighted homogeneous space curves given by 
\[\mu =\tau -t+1\]
where $t$ is the Cohen-Macaulay type, i.e. the second Betti number, of the singularity (\cite{greuel-deform}). 

A direct calculation shows that $D^3_1(f)$ is a Cohen-Macaulay space curve with an isolated singularity at the origin. Moreover, it can be defined by $16\times 16$-minors of a $17\times 16$-matrix over $\mathcal{O}_{\mathbb{C}^3,0}$.\footnote{In fact, by the Hilbert-Burch theorem, any Cohen-Macaulay variety of codimension 2 can be defined by $r\times r$-minors of an $(r+1)\times r$-matrix (see, for example, \cite[Theorem 20.15]{eisenbud}).} So, we can use Fr{\"u}hbis-Kr{\"u}ger's theory (\cite{fruhbis-kruger-sp}) on matrix singularities to calculate $T^1$ of $D^3_1(f)$. We run the following code on \textsc{Singular}.

\vskip8pt 
\noindent\verb|LIB "spcurve.lib"; | \\
\verb|//| the ideal of $D^3_1(f)$ is \verb|d31| \\ 
\verb|matrix m31=syz(d31);| \quad // a matrix representation of \verb|d31|\\
\verb|list t31=matrixT1(m31,3);| \\
\verb|vdim(std(t31[2]));| \quad // the Tjurina number of $D^3_1(f)$ \\
\verb|//->168356| \\
\verb|CMtype(d31);|  \quad // Cohen-Macaulay type of $D^3_1(f)$\\
\verb|//-> 16| 
\vskip8pt 
\noindent Therefore, 
\[\mu(D^3_1(f))=\tau-t+1=168356-16+1=168341.\]
\end{proof}

The ramification locus $R_{f}$ of $f_1$ is also a space curve with an isolated singularity at the origin. Its matrix is given by (\ref{rf}). So, its Cohen-Macaulay type is $3$. We also find that  $\tau(R_f)=127$. Hence,

\begin{lem} $\mu(R_{f})=125$.
\end{lem}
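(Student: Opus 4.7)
The plan is to invoke Greuel's formula $\mu=\tau-t+1$ in exactly the same way as in the preceding lemma, with $t$ the Cohen-Macaulay type of the isolated weighted homogeneous space curve singularity $R_f$. Three things will need to be checked: that $R_f$ satisfies the hypotheses of Greuel's formula, that the Jacobian matrix (\ref{rf}) can be used directly to read off $t$, and that the Tjurina number $\tau(R_f)$ can be computed in \textsc{Singular}.

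First I would verify that $R_f$ is a weighted homogeneous Cohen-Macaulay space curve with an isolated singularity at the origin. Weighted homogeneity is immediate from the weights $(1,2,3)$ and degrees $(4,5,6,7)$ of $f_1$: every entry of $\textnormal{d}f$ in (\ref{rf}) is weighted homogeneous, hence so is every $3\times 3$-minor. The $1$-dimensionality was already established by the \textsc{Singular} computation in the proof of Proposition \ref{prop-ex1}. Cohen-Macaulayness then follows from the Hilbert-Burch theorem, since $I(R_f)$ is generated by the four $3\times 3$-minors of the $4\times 3$-matrix (\ref{rf}) and has the correct codimension $2$; the matrix (\ref{rf}) is (up to sign adjustments) a minimal syzygy matrix of the ideal, so its number of columns gives the Cohen-Macaulay type directly, namely $t=3$. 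Isolatedness of the singularity of $R_f$ at the origin is forced by finite $\mathcal{A}$-determinacy of $f_1$: a non-isolated singularity of $R_f$ in any neighbourhood of the origin would yield a positive-dimensional locus of non-stable points, contradicting the isolated instability of $f_1$.

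It remains to compute $\tau(R_f)$. Here I would use Fr{\"u}hbis-Kr{\"u}ger's \texttt{matrixT1} routine from \texttt{spcurve.lib}, applied to the syzygy matrix of $I(R_f)$, exactly as in the triple-point lemma, obtaining $\tau(R_f)=127$. Plugging into Greuel's formula yields
\[ \mu(R_f)=\tau(R_f)-t+1 = 127-3+1 = 125, \]
as claimed.

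The main obstacle is the usual one for such matrix-singularity computations: the \textsc{Singular} output for $\tau$ is a numerical answer to a computation on a sizeable deformation module, and one has to trust both the implementation of \texttt{matrixT1} and the base-field reduction (as in Proposition \ref{prop-ex1}, it is prudent to carry out the calculation over a large positive characteristic such as $31991$). Once $\tau(R_f)=127$ is accepted, the rest of the argument is a direct application of the Hilbert-Burch theorem together with Greuel's formula.
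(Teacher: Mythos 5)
Your proposal is correct and follows essentially the same route as the paper: Greuel's formula $\mu=\tau-t+1$ with $t=3$ read off from the $4\times 3$ Hilbert--Burch matrix (\ref{rf}) and $\tau(R_f)=127$ computed in \textsc{Singular}, giving $127-3+1=125$. The additional checks you spell out (weighted homogeneity, codimension, isolatedness via finite $\mathcal{A}$-determinacy) are left implicit in the paper but are sound.
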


\subsection{Quadruple points}
Finding the number of quadruple points of a stable perturbation of $f$ requires a little bit of work. The analytic structure of $M_4$ only gives us information about the geometrical picture of quadruple points. Whether the vector space dimension of the 3rd Fitting ideal counts the number of quadruple points of a stable perturbation of a map-germ in $\mathcal{E}_{3,4}^0$ is still an open question. For a proof, we need to show that the module
\[M:= \frac{\mathcal{O}_{\mathbb{C}^4\times \mathbb{C}^d,0}}{\textnormal{Fitt}_3(F_*\mathcal{O}_{\mathbb{C}^3\times\mathbb{C}^d,0})}\] 
 satisfy the principle of conservation (\cite[Theorem 6.4.7]{jong-pfister}), where $F\in \mathcal{E}_{3+d,4+d}^0$  is a stable unfolding of $f$. That is, the stalk of $M$ at $0$ is a free $\mathcal{O}_{\mathbb{C}^d,0}$-module of finite rank. 
However, it is a huge task for a computer to conclude such calculation for our example -- a stable unfolding of $f$ requires a minimum of 40 parameters. At the moment, we can only conjecture the number of quadruple points. 

\begin{conj} The number of quadruple points is $8970$.
\end{conj}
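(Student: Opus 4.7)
The plan is to separate the conjecture into two pieces: (i) verify that the algebraic count $\dim_{\mathbb C}\mathcal{O}_{\mathbb{C}^4,0}/\mathrm{Fitt}_3(f_*\mathcal{O}_{\mathbb{C}^3,0})$ equals $8970$, and (ii) show that this Fitting-ideal length is preserved under a stabilisation and equals the geometric number of quadruple points, i.e.\ establish the principle of conservation for the module $M$ in the excerpt. Step (i) is a direct \textsc{Singular} computation: present $f_*\mathcal{O}_{\mathbb{C}^3,0}$ by the $17\times 17$ symmetric matrix $\Lambda$ already used in Section 4, extract $\mathrm{Fitt}_3$ as the ideal of $14\times 14$-minors of $\Lambda$, and compute the vector space dimension of the quotient in $\mathcal{O}_{\mathbb{C}^4,0}$. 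Since the generators of $\Lambda$ have already been produced for the triple-point analysis, this step is essentially routine.

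For step (ii), I would fix a stable unfolding $F\in\mathcal{E}_{3+d,4+d}^0$ of $f$, form the module
\[
M=\frac{\mathcal{O}_{\mathbb{C}^{4+d},0}}{\mathrm{Fitt}_3(F_*\mathcal{O}_{\mathbb{C}^{3+d},0})},
\]
and try to prove that $M_0$ is $\mathcal{O}_{\mathbb{C}^d,0}$-free. By miracle flatness this reduces to verifying that $M$ is Cohen--Macaulay of the correct (zero) relative dimension over the parameter axis. Once this freeness is in hand, the generic fibre of $M$ (by stability of $F$) consists only of transverse quadruple points, so its length is simultaneously the geometric number of quadruple points of a stabilisation and, by conservation of length, equal to the special fibre length $8970$ from step (i).

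The hard part will be step (ii). A head-on attack is blocked by the size of a stable unfolding (at least $40$ parameters according to Section 4); even presenting $F_*\mathcal{O}_{\mathbb{C}^{3+d},0}$ is beyond present computer algebra capacity. I would therefore try to avoid the full unfolding: first, use the Hilbert--Burch description of $D^3_1(f)$ (the $16\times 16$-minors of a $17\times 16$ matrix) to exhibit $D^4_1(f)$ as an iterated multiple-point space and show it is locally a complete intersection at the origin; second, invoke the general Cohen--Macaulay criteria for Fitting ideals of finite maps (in the spirit of Mond--Pellikaan's work on $\mathrm{Fitt}_1$ and the generalisations by Marar--Mond--Nuño-Ballesteros) to promote this to a statement about $M$. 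If such a structural argument can be made to work, the freeness over the unfolding base follows without ever having to run the $40$-parameter computation.

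As a back-up strategy, I would try a Thom-polynomial count. Kazarian's and Ohmoto's formulae express the number of quadruple points of a stable map $\mathbb{C}^{n}\to\mathbb{C}^{n+1}$ as a characteristic number in the source, and for weighted homogeneous germs with weights $(1,2,3)$ and degrees $(4,5,6,7)$ this specialises (as in Ohmoto's formula used for $\mu_I$) to an explicit rational function of the weights and degrees. A match of that value with $8970$ would provide independent evidence and, combined with step (i), would strongly corroborate the conjecture even in the absence of a complete proof of conservation. Should the two numbers disagree one would then know that the algebraic Fitting count overcounts, which itself would settle the open question in a different (and interesting) direction.
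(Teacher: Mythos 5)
There is a genuine gap, and it is worth being precise about where it sits. The statement you are asked to prove is labelled a \emph{conjecture} in the paper for a specific reason, which the paper spells out: it is not known whether $\dim_{\mathbb C}\mathcal{O}_{\mathbb{C}^4,0}/\mathrm{Fitt}_3(f_*\mathcal{O}_{\mathbb{C}^3,0})$ counts quadruple points for map-germs in $\mathcal{E}_{3,4}^0$, and settling this requires showing that the module $M$ built from a stable unfolding $F$ satisfies the principle of conservation, i.e.\ that its stalk at $0$ is $\mathcal{O}_{\mathbb{C}^d,0}$-free. Your step (i) is exactly where the number $8970$ comes from, and your step (ii) is exactly the obstruction the paper identifies. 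But your treatment of step (ii) consists of a list of strategies you \emph{would try} --- reducing to a Cohen--Macaulay statement via miracle flatness, exhibiting $D^4_1(f)$ as a local complete intersection via an iterated Hilbert--Burch argument, invoking Mond--Pellikaan-type criteria --- none of which is actually carried out or shown to apply. The Mond--Pellikaan results give Cohen--Macaulayness of $\mathrm{Fitt}_1$ (and, under dimension hypotheses, $\mathrm{Fitt}_2$), but the analogous statement for $\mathrm{Fitt}_3$ of a corank~3 germ is precisely the open question; you cannot ``invoke'' it without proving it. Likewise the Hilbert--Burch description is cited in the paper only for the codimension-2 space $D^3_1(f)$, and it does not transfer to the codimension-3 quadruple-point locus without a new argument.

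Your back-up strategy (a Thom-polynomial count \`a la Kazarian--Ohmoto) would, if the specialisation were computed and matched $8970$, provide corroborating evidence --- and you candidly say as much --- but evidence is not a proof, and you do not actually evaluate the formula. The net effect is that your proposal reproduces the paper's own analysis of why the statement remains conjectural, packaged as a research plan rather than a proof. That is a reasonable assessment of the problem, but it does not establish the statement; the essential missing ingredient remains a proof that $\mathrm{Fitt}_3(F_*\mathcal{O}_{\mathbb{C}^{3+d}\times\mathbb{C}^d,0})$ defines a module free over the unfolding base (equivalently, that the Fitting-ideal length is conserved under stabilisation), and no step of your argument supplies it.
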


\subsection{Double points}
For a stable perturbation $f_t$ of $f$, $D^2_1(f_t)$ has the homotopy type of a wedge of $2$-spheres (see \cite[Remark 1.1 (2)]{mond-virtual}). We would also like to calculate the number of spheres in the wedge using the methods explained in \cite{mond-virtual}. However, due to computer memory restrictions, we have to leave this question to another study.

\section*{Acknowledgments}
The author would like to thank David Mond for his comments on the proof of Proposition \ref{prop-sing}.

\bibliographystyle{amsplain}
\bibliography{a-reference}
\end{document}